  \newcommand{\Z}{\mathbb{Z}}
  \renewcommand{\i}{\mathbf{i}}
  \renewcommand{\j}{\mathbf{j}}
  \newcommand{\qed}{\hspace*{\fill} $\Box$}
  \newtheorem{theo}{Theorem}[section]
  \newtheorem{defn}[theo]{Definition}
  \newtheorem{prop}[theo]{Proposition}
  \newtheorem{lemma}[theo]{Lemma}
  \newtheorem{example}[theo]{Example}
  \numberwithin{equation}{section} 
\begin{document}
  
\title{On the edge-balanced index sets of product graphs}

\author{
Elliot Krop\thanks{Department of Mathematics, Clayton State University, (ElliotKrop@clayton.edu)}
\and 
Sin-Min Lee\thanks{Department of Computer Science, San Jose State University, (lee.sinmin35@gmail.com)}
\and 
Christopher Raridan\thanks{Department of Mathematics, Clayton State University, (ChristopherRaridan@clayton.edu).}
}
   
\date{\today}

\maketitle

\begin {abstract}
We characterize strongly edge regular product graphs and find the edge-balanced index sets of complete bipartite graphs without a perfect matching, the direct product $K_n\times K_2$. We also prove a lemma that is helpful to determine the edge-balanced index sets of regular graphs.
\\[\baselineskip] 2000 Mathematics Subject Classification: 05C78, 05C25
\\[\baselineskip] Keywords: Edge-labeling, partial-coloring, edge-friendly labeling, friendly labeling, cordiality, friendly index set, edge-balanced index set.
\end {abstract}

 \section{Introduction}
 
 \subsection{Definitions}
 
 For basic graph theoretic notation and definitions see Diestel~\cite{Diest}.  All graphs $G(V,E)$ are finite, simple, and undirected with vertex set $V$ and edge set $E$.  The number of vertices is denoted $p(G)$ and the number of edges is $q(G)$. The complete graph on $n$ vertices is denoted $K_n$ and the bipartite graph with $n$ vertices and $m$ vertices in the first and second \emph{part}, respectively, is denoted $K_{n,m}$. We say two vertices $x$ and $y$ are adjacent if there exists an edge $(x,y)$ and write $x \sim y$; if $x$ and $y$ are not adjacent we write $x \not\sim y$.
 
  A \textit{labeling} of a graph $G$ with $H\subseteq G$, is a function $f : H \rightarrow \Z_2$, where $\Z_2 = \{0,1\}$. If $H=E(G)$ $(H=V(G))$ and $f$ is surjective, call the labeling an \emph{edge labeling} (\emph{vertex labeling}). For an edge $e$, let $f(e)$ denote the label on $e$ and call any edge with label $i\in \Z_2$ an $i$-edge. Setting $e_f(i) = \mbox{card}\{e \in E : f(e) = i\}$, we say a labeling $f$ is \emph{edge-friendly} when $|e_f(0)-e_f(1)|\leq 1$. For a vertex $v$, let $N_i(v)=\{u\in V: f(uv)=i\}$ and set $\deg_i(v) = |N_i(v)|$, called the $i$-degree of $v$. An edge-friendly labeling $f : E \rightarrow \Z_2$ induces a partial vertex labeling $f^+ : V \rightarrow \Z_2$ defined by $f^+(v) = 0$ if $\deg_0(v) > \deg_1(v)$, $f^+(v) = 1$ if $\deg_1(v) > \deg_0(v)$, and $f^+(v)$ is not defined if $\deg_0(v) = \deg_1(v)$, that is, $v$ is \emph{unlabeled}. Setting $v_f(i) = \mbox{card}\{v \in V : f^+(v) = i\}$, the \emph{edge-balanced index set} of a graph $G$, $EBI(G)$, is defined as $\{|v_f(0) - v_f(1)| : f \mbox{ is edge-friendly}\}$. A graph $G$ is said to be an \emph{edge-balanced} graph if there is an edge-friendly labeling $f$ satisfying $|v_f(0) - v_f(1)| \leq 1$ and \emph{strongly edge-balanced} if $v_f(0) = v_f(1)$ and $e_f(0) = e_f(1)$.  When the labeling $f$ is clear from context, we write $e(i)$ and $v(i)$.

  \subsection{History}
 
 Binary labelings are a simplification of graceful labelings, introduced by Cahit~\cite{Cahit} in his seminal paper on cordial graphs. Since then, other notions of balance in binary labelings (\cite{LeeLiuTan} and~\cite{KongLee} for example) have been introduced and much work has been done to classify graphs. So far, the problem is far from complete, though there are classifications of index sets for specific graph classes (see~\cite{ChenHuangLeeLiu} and~\cite{ChopraLeeSu} for example).
 
 Our objective is to assign a binary labeling to some substructure of graphs $G$ (e.g., the edges) so that the assignment is balanced and induces a labeling on some other substructure (e.g., the vertices). We then attempt to classify the degree of imbalance in the induced labeling of $G$. We hope that such an index set could form an invariant that in some way can distinguish classes of graphs. In this paper, we focus on the edge-balanced index sets of various product graphs.

 \subsection{Product Graphs}
 
  The following three constructions of graphs were considered in~\cite{Harary}. For further study, see~\cite{ImrichKlavzar}.

\begin{defn}
 The \emph{lexicographic product} (\emph{composition}) of two graphs $G_1(V_1,E_1)$ and $G_2(V_2,E_2)$, denoted by $G_1[G_2]$, is a graph with vertex set $V_1\times V_2$ and edge set $E(G_1[G_2]) = \{((u_1,v_1),(u_2,v_2)) : (u_1,u_2) \in E_1, \mbox{ or } u_1 = u_2 \mbox{ and } (v_1,v_2) \in E_2\}$.
\end{defn}

\begin{defn}
 The \emph{direct product} (or \emph{conjuction}, \emph{Kronecker product}) of two graphs $G_1(V_1,E_1)$ and $G_2(V_2,E_2)$, denoted by $G_1 \times G_2$, is a graph with vertex set $V_1\times V_2$ and edge set $E(G_1 \times G_2) = \{((u_1,v_1),(u_2,v_2)) : (u_1,u_2) \in E_1 \mbox{ and } (v_1,v_2) \in E_2\}$.
\end{defn}

\begin{defn}
 The \emph{Cartesian product} of two graphs $G_1(V_1,E_1)$ and $G_2(V_2,E_2)$, denoted by $G_1 \square G_2$, is a graph with vertex set $V_1 \times V_2$ and edge set $E(G_1 \square G_2) = \{((u_1,v_1),(u_2,v_2)) : v_1=v_2 \mbox{ and } (u_1,u_2) \in E_1, \mbox{ or } u_1 = u_2 \mbox{ and} (v_1,v_2) \in E_2\}$.
\end{defn}

 \section{Strongly Edge Regular Product Graphs}
 
 In~\cite{ChenHuangLeeLiu}, Chen, Huang, Lee, and Liu produced the following result:

\begin{theo} \label{evensize}
If $G$ is a simple connected graph with order $n$ and even size, then there exists an edge-friendly labeling $f$ of $G$ such that $G$ is strongly edge-balanced.
\end {theo}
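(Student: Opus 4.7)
The plan is a monovariant-reduction argument. Because $|E(G)|=q$ is even, any edge-friendly labeling $f$ automatically satisfies $e_f(0)=e_f(1)=q/2$, so the only remaining condition for strong edge-balance is $v_f(0)=v_f(1)$. I would start with any edge-friendly labeling $f$ and track $D(f):=v_f(0)-v_f(1)$. The bitwise-complementary labeling $\bar f$ (swap $0\leftrightarrow 1$ on every edge) is again edge-friendly and satisfies $D(\bar f)=-D(f)$, so without loss of generality $D(f)\ge 0$, and the task reduces to showing that whenever $D(f)>0$ one can produce an edge-friendly $f'$ with $D(f')<D(f)$.

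To achieve the reduction I would use a \emph{swap}: exchange the labels of one $0$-edge and one $1$-edge, which preserves $e_f(0)=e_f(1)$ and hence edge-friendliness. Because $D(f)>0$ there is a $0$-labeled vertex $u$, and because $\sum_v(\deg_0(v)-\deg_1(v))=2(e_f(0)-e_f(1))=0$ combined with $v_f(0)>v_f(1)$ there must exist a vertex $w$ that is either $1$-labeled or unlabeled with $\deg_1(w)\ge\deg_0(w)$. Using the connectedness of $G$, pick a path between $u$ and $w$ and perform a telescoping chain of swaps along it, arranged so that each interior vertex sees matched swaps (net change zero) while $u$ sheds a $0$-edge and $w$ sheds a $1$-edge; this drives each endpoint's induced label toward the center and decreases $D$.

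The main obstacle is controlling the side-effects at intermediate and terminal path vertices: a single naive swap can flip the induced labels of up to four vertices, and a careless choice can increase $D$. In particular, if the chosen $u$-$w$ path is not alternating in labels then a chain-swap disturbs the interior vertices, and if the label gaps at $u$ or $w$ are too large, even a correct chain-swap fails to change the induced label. Resolving this requires a Berge-style alternating-trail argument: one establishes that whenever $D(f)>0$ there is an alternating $0$-$1$ trail from a $0$-labeled vertex to a non-$0$-labeled vertex, since the absence of such a trail forces a double-counting contradiction with the identity $\sum_v(\deg_0(v)-\deg_1(v))=0$. Once the alternating trail is produced and the chain-swap is executed, the monovariant $D$ strictly decreases; iterating drives $D$ to $0$ and yields the desired strongly edge-balanced labeling.
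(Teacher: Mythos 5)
First, a point of comparison: the paper does not prove this theorem at all --- it is quoted from Chen, Huang, Lee, and Liu~\cite{ChenHuangLeeLiu} --- so your attempt can only be judged on its own merits. Your easy half is right: with $q(G)$ even, edge-friendliness forces $e_f(0)=e_f(1)$, and complementation lets you assume $D(f)=v_f(0)-v_f(1)\ge 0$, so everything rides on the switching step. That step, as described, does not work. To keep $e_f(0)=e_f(1)$ your alternating trail must contain equally many $0$- and $1$-edges, hence must begin with a $0$-edge at $u$ and end with a $1$-edge at $w$; flipping it decreases $\deg_0(u)-\deg_1(u)$ by $2$ but \emph{increases} $\deg_0(w)-\deg_1(w)$ by $2$. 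So the move at $w$ pushes $w$ toward the $0$-labeled class, which raises $D$: if $w$ is unlabeled it becomes $0$-labeled ($D$ up by $1$), and if $\deg_1(w)-\deg_0(w)=1$ it flips to $0$-labeled ($D$ up by $2$). The move is a net gain only when $u$ is ``barely'' $0$-labeled ($\deg_0(u)-\deg_1(u)\in\{1,2\}$) while $w$ is ``robustly'' $1$-labeled ($\deg_1(w)-\deg_0(w)\ge 3$), and you have given no argument that such a pair, joined by an alternating trail of the required type, exists whenever $D>0$. The degree-sum identity $\sum_v(\deg_0(v)-\deg_1(v))=0$ only guarantees \emph{some} vertex with $\deg_1>\deg_0$; it says nothing about the margins, and the asserted ``Berge-style'' trail-existence lemma is exactly the hard content and is left unproved.

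Two further problems remain even if the decreasing step were established. A single flip can change $D$ by as much as $4$ (both endpoints changing class), so the monovariant argument only shows that $D$ eventually becomes nonpositive, not that it ever equals $0$; a run from $D=1$ to $D=-1$ produces no strongly edge-balanced labeling, and complementation does not rescue this. And your interior-vertex bookkeeping assumes the trail passes through each intermediate vertex an even number of times with paired flips, which is automatic for a genuine alternating trail but needs to be said once trails (rather than paths) are allowed. To repair the proof you would either need a sharper switching lemma with explicit control of the margins at both endpoints and of the step size of $D$, or, more in the spirit of the literature, a direct construction --- e.g., decomposing the $q/2\cdot 2$ edges of a connected graph of even size into paths of length two and labeling each pair $\{0,1\}$ with a balancing choice of orientation --- rather than a descent from an arbitrary labeling.
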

 
 The next result, found in~\cite{Harary}, can be obtained by simple counting.

\begin{prop} \label{productgraphsize}
Let $G$ and $H$ be finite graphs. Then
\begin{enumerate}
	\item $q(G[H]) = p(H)^2 q(G) + p(G) q(H)$.
	\item $q(G\times H) = 2q(G) q(H)$.
	\item $q(G\square H) = p(H) q(G) + p(G) q(H)$.
\end{enumerate}
\end{prop}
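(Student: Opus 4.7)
The plan is a direct combinatorial enumeration for each of the three claims, partitioning the edge set of the product graph according to the clauses of its defining predicate. The key observation that makes every count clean is that in all three definitions the disjunctive cases are disjoint: either the first coordinates of the two endpoints agree or they do not, and these alternatives force different clauses. Thus I can count each class of edges separately and add.

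For the lexicographic product $G[H]$, edges of the first type come from an edge $u_1u_2\in E_1$ together with an arbitrary ordered pair $(v_1,v_2)\in V_2\times V_2$; because $u_1\ne u_2$, distinct ordered pairs produce distinct unordered edges, so this class contributes $p(H)^2\,q(G)$. Edges of the second type arise, for each $u\in V_1$, as a copy of $H$ placed on $\{u\}\times V_2$, contributing $p(G)\,q(H)$ in total. Summing yields (1).

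For the direct product, every edge of $G\times H$ projects coordinate-wise to an edge of $G$ and an edge of $H$; conversely, a fixed pair $u_1u_2\in E_1$ and $v_1v_2\in E_2$ produces exactly the two edges $\{(u_1,v_1),(u_2,v_2)\}$ and $\{(u_1,v_2),(u_2,v_1)\}$ in $G\times H$, distinct because $u_1\ne u_2$ and $v_1\ne v_2$. This accounts for every edge exactly once and gives (2). For the Cartesian product, the two disjoint classes are handled analogously: for each fixed $v\in V_2$ the subgraph of $G\square H$ induced on $V_1\times\{v\}$ is a copy of $G$, accounting for $p(H)\,q(G)$ edges, and symmetrically, fixing $u\in V_1$ yields $p(G)\,q(H)$ edges of the second class.

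The only subtle point, and the main possible source of error, is in (1): one must notice that the ordered pair $(v_1,v_2)$ is allowed to satisfy $v_1=v_2$, so the correct multiplier is $p(H)^2$ rather than $p(H)\bigl(p(H)-1\bigr)$. In (2) the distinctness of the two ``crossings'' also needs a brief verification, but both follow immediately from the fact that the factor graphs are simple. No genuine obstacle arises; the proposition is the bookkeeping promised in the statement.
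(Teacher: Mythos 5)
Your counting argument is correct and is exactly the ``simple counting'' the paper alludes to (it cites Harary and omits the proof): partition the edges by the defining clauses, note the clauses are disjoint, and count each class. The two points you flag --- the multiplier $p(H)^2$ rather than $p(H)(p(H)-1)$ in the lexicographic case, and the factor of $2$ from the two distinct crossings in the direct product --- are precisely the places where the bookkeeping could go wrong, and you handle both correctly.
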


Applying Theorem~\ref{evensize} to Proposition~\ref{productgraphsize} we obtain the following complete characterization of strongly edge-balanced product graphs.

\begin{theo}
Let $G$ and $H$ be finite graphs. Then 
\begin{enumerate}
	\item $G\times H$ is strongly edge-balanced.
	\item $G \square H$ is strongly edge-balanced if and only if $G[H]$ is strongly edge-balanced.
 \item The following conditions imply $G\square H$ and $G[H]$ are strongly edge-balanced and if $G\square H$ or $G[H]$ are strongly edge-balanced, at least one of the following conditions must hold: 		
	\begin{itemize}
		\item $p(G) \equiv p(H) \equiv 0 \pmod 2$.
		\item $q(G) \equiv q(H) \equiv 0 \pmod 2$.
		\item $q(G) \equiv p(G) \equiv 0 \pmod 2$.
		\item $q(H) \equiv p(H) \equiv 0 \pmod 2$.
		\item $p(H) \equiv q(H) \equiv q(G) \equiv p(G) \equiv 1 \pmod 2$.
	\end{itemize} 
\end{enumerate}
\end{theo}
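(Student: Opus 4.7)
The central observation is that a strongly edge-balanced graph satisfies $e_f(0)=e_f(1)=q/2$, so its edge count is forced to be even; conversely, Theorem~\ref{evensize} shows that for connected graphs even size is already sufficient. The plan is therefore to reduce each clause of the theorem to a parity computation on the edge-count formulas of Proposition~\ref{productgraphsize}.

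Part (1) is immediate: $q(G\times H)=2q(G)q(H)$ is always even, so Theorem~\ref{evensize} applies at once.

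For part (2), subtracting the two edge-count formulas yields
\[
q(G[H])-q(G\square H)=p(H)\bigl(p(H)-1\bigr)q(G),
\]
which is always even because $p(H)(p(H)-1)$ is a product of consecutive integers. Hence $q(G\square H)$ and $q(G[H])$ have the same parity, so one has even size iff the other does, and the opening observation then gives the claimed equivalence.

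For part (3), by (2) it suffices to characterize when $p(H)q(G)+p(G)q(H)$ is even in terms of the parities of the four numbers $p(G),q(G),p(H),q(H)$. The sum is even precisely when $p(H)q(G)\equiv p(G)q(H)\pmod 2$, which splits into two cases: either both products vanish modulo $2$, or both equal $1$ modulo $2$. The first case subdivides into exactly the four ways of choosing which pair among $\{p(G),q(G),p(H),q(H)\}$ is simultaneously even, recovering the first four bullets one-to-one; the second case forces all four quantities to be odd, recovering the fifth bullet. Both the sufficiency (these parity patterns make the size even, so Theorem~\ref{evensize} applies) and the necessity (a strongly edge-balanced graph must have even size, and the case analysis exhausts all parity patterns delivering an even size) follow simultaneously.

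\emph{Main difficulty.} The theorem is essentially a parity bookkeeping exercise, so there is no serious obstacle; the only care needed is in part (3), verifying that the four subcases of the ``both zero'' situation are in bijection with the four pair-wise-even bullets and that no case is missed. A minor side issue is the connectedness hypothesis in Theorem~\ref{evensize}, which would need to be inherited by (or separately verified for) the product graphs in question.
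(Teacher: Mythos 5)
Your proposal is correct and is essentially the paper's own argument: the paper gives no written proof beyond the remark that the theorem follows by applying Theorem~\ref{evensize} to the edge counts in Proposition~\ref{productgraphsize}, and your parity bookkeeping is exactly that reduction, with the case analysis in part (3) carried out correctly. The connectedness caveat you raise at the end is a genuine loose end that the paper itself also leaves unaddressed (e.g.\ $G\times H$ can be disconnected even for connected $G$ and $H$), so flagging it is appropriate rather than a defect of your write-up.
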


 \section{Regular Graphs}
 
 A common approach to finding the edge-balanced index set is to start by finding the maximum element in the set and then produce the smaller values by a series of edge-label switches. The following lemma gives a general upper bound on the maximum edge-balanced index set of regular graphs. We show that $K_n\times K_2$ attains this upper bound in the next section. 
 
\begin{lemma}\label{maxEBIregular}
 Let $G$ be a graph of order $n$ and regularity $r$, and let $F$ be the set of edge-friendly labelings of $G$. When $r$ is odd and $n \equiv 0 \pmod 4$,
 \[\max_{f\in F}\{EBI(G)\}\leq \frac{(r-1)n}{r+1}\]
 and when $n\equiv 2 \pmod 4$
 \[\max_{f\in F}\{EBI(G)\}\leq \frac{(r-1)n+4}{r+1}\]
 \end{lemma}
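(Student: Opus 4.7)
My plan is to prove both inequalities by a single double-counting argument combined with a parity check on $q(G)=nr/2$. Without loss of generality I may assume $v_f(0)\geq v_f(1)$ (swap the roles of $0$ and $1$ otherwise), so the quantity I want to bound is $v_f(0)-v_f(1)$. The first observation I would record is that when $r$ is odd, no vertex can be unlabeled: for any $v$, $\deg_0(v)+\deg_1(v)=r$ is odd, so the two summands cannot be equal. Hence $v_f(0)+v_f(1)=n$ and $v_f(0)-v_f(1)=2v_f(0)-n$.

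Next I would exploit the contribution of $0$-labeled vertices to the count of $0$-edge endpoints. A vertex $v$ is labeled $0$ precisely when $\deg_0(v)>\deg_1(v)$; combined with $\deg_0(v)+\deg_1(v)=r$ (odd), this forces $\deg_0(v)\geq (r+1)/2$ at every $0$-vertex. Summing $\deg_0$ over all vertices yields the key inequality
\[
2\,e_f(0) \;=\; \sum_{v\in V}\deg_0(v) \;\geq\; v_f(0)\cdot\frac{r+1}{2}.
\]
So any upper bound on $e_f(0)$ coming from edge-friendliness translates, via $v_f(0)-v_f(1)=2v_f(0)-n$, into the desired bound on the index.

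The two cases of the lemma correspond exactly to the parity of $nr/2$. When $n\equiv 0\pmod 4$, $n/2$ is even, so $nr/2$ is even (as $r$ is odd), and edge-friendliness gives $e_f(0)\leq nr/4$; plugging into the displayed inequality yields $v_f(0)\leq nr/(r+1)$ and therefore $v_f(0)-v_f(1)\leq (r-1)n/(r+1)$. When $n\equiv 2\pmod 4$, $nr/2$ is odd, so edge-friendliness only guarantees $e_f(0)\leq (nr+2)/4$; then $v_f(0)\leq (nr+2)/(r+1)$, which gives $v_f(0)-v_f(1)\leq ((r-1)n+4)/(r+1)$.

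I do not expect a real obstacle here: the entire argument is one double count together with a parity computation. The two points that require a little care are (i) using the oddness of $r$ to rule out unlabeled vertices, so that $v_f(0)+v_f(1)=n$ holds exactly, and (ii) tracking the parity of $nr/2$ so that the ceiling in the edge-friendliness bound on $e_f(0)$ produces the correct constant $4$ appearing in the $n\equiv 2\pmod 4$ case.
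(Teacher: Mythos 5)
Your argument is correct and is essentially the paper's own: both rest on the double count $2e_f(0)=\sum_v\deg_0(v)$, the threshold $\deg_0(v)\geq\frac{r+1}{2}$ at $0$-labeled vertices (equivalently $\deg_0(v)\leq\frac{r-1}{2}$ at $1$-labeled ones), the absence of unlabeled vertices for odd $r$, and the parity of $nr/2$ to pin down $e_f(0)$. The only cosmetic difference is that you upper-bound the majority count $v_f(0)$ while the paper lower-bounds the minority count, which are equivalent via $v_f(0)+v_f(1)=n$.
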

 
 \begin{proof}
 Let $G$ be a graph of order $n$ and regularity $r$ and let $f\in F$. Set $D=\{v\in V(G):\deg_0(v)>\frac{r}{2}\}$ and $S=\{v\in V(G):\deg_1(v)>\frac{r}{2}\}$. 
 
Suppose $r$ is odd. For any $f$, $|E|=\frac{rn}{2}$. We first suppose that $n\equiv 0 \pmod 4$ and consequently $e(0)=e(1)$. Since $\min_{v\in D}(\deg_0(v))\leq r$ and $\min_{v\in S}(\deg_1(v))\geq \frac{r+1}{2}$, we have $\max_{v\in S}(\deg_0(v))\leq \frac{r-1}{2}$, so that
\begin{align*}
	\frac{rn}{2}=2e(0)\leq v(0)r+v(1)\frac{r-1}{2} = v(0)\frac{r+1}{2}+n\frac{r-1}{2}
\end{align*}
which implies 
 \[v(0)\geq \frac{n}{r+1}.\]
Hence, for odd $r$, $n\equiv 0 \pmod 4$,
 \[\max_{f\in F}\{EBI(G)\}\leq \frac{(r-1)n}{r+1}.\]
 Next, suppose that $n\equiv 2 \pmod 4$ and consequently $e(0)=e(1)+1$. Since $\min_{v\in D}(\deg_0(v))\leq r$ and $\min_{v\in S}(\deg_1(v))\geq \frac{r+1}{2}$, we have $\max_{v\in S}(\deg_0(v))\leq \frac{r-1}{2}$, so that
\begin{align*}
	\frac{rn}{2}=2e(0)+1\leq v(0)r+v(1)\frac{r-1}{2}+1 = v(0)\frac{r+1}{2}+n\frac{r-1}{2}+1
\end{align*}
which implies 
 \[v(0)\geq \frac{n-2}{r+1}.\]
Hence, for odd $r$, $n\equiv 2 \pmod 4$,
 \[\max_{f\in F}\{EBI(G)\}\leq \frac{(r-1)n+4}{r+1}.\]

\qed \end{proof}

 Both the maximum and minimum numbers of unlabeled vertices in edge-friendly labelings of $K_n$ were found in~\cite{KropLeeRaridan}. For other graphs, this problem is open.

 \section{Bipartite Double Covers}

We define the \emph{bipartite double cover} of $G$ as the direct product $G \times K_2$. The bipartite double cover of the complete graph $K_n$ is called the \emph{crown graph} and is the complete bipartite graph $K_{n,n}$ minus a perfect matching.  Note that $K_n\times K_2$ is a regular graph with odd regularity for even $n$ and even regularity for odd $n$.

\begin{lemma} \label{evenEBI}
If $G$ is a graph with all vertices of odd degree, then $EBI(G)$ contains only even integers.
\end{lemma}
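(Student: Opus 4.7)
The plan is to observe that when every vertex has odd degree, two things happen simultaneously: no vertex can be unlabeled, and the total number of vertices is forced to be even. These two facts together force $|v_f(0)-v_f(1)|$ to be even for every edge-friendly labeling $f$.

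More concretely, first I would fix an arbitrary edge-friendly labeling $f$ and a vertex $v\in V(G)$. Since $\deg_0(v)+\deg_1(v)=\deg(v)$ is odd by hypothesis, it is impossible that $\deg_0(v)=\deg_1(v)$; hence the induced partial vertex labeling $f^+$ is in fact a total labeling, and $v_f(0)+v_f(1)=n$, where $n=|V(G)|$.

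Next I would invoke the handshaking lemma: the number of odd-degree vertices in any graph is even, so under our hypothesis $n$ itself is even. Combined with $v_f(0)+v_f(1)=n$, this shows that $v_f(0)+v_f(1)$ is even, and therefore $v_f(0)-v_f(1)$ has the same parity, namely even. Consequently $|v_f(0)-v_f(1)|$ is even, and since this holds for every edge-friendly $f$, every element of $EBI(G)$ is even.

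There is really no obstacle here beyond carefully verifying that the ``unlabeled'' case cannot occur; the rest is a parity count. No structural information about $G$ beyond the degree hypothesis is needed, which is why the statement applies uniformly to all such graphs (and in particular to $K_n\times K_2$ for even $n$, the case of interest for the next section).
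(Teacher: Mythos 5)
Your proof is correct and follows essentially the same route as the paper's: odd degrees force every vertex to be labeled, the handshaking lemma makes $n$ even, and the parity of $v_f(0)-v_f(1)$ follows from that of $v_f(0)+v_f(1)=n$. No issues.
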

 
\begin{proof}
 Let $G$ be a graph of order $n$ where each vertex is of odd degree. For any edge-friendly labeling of $G$, there are no unlabeled vertices. Hence $v(1)+v(0)=n$ and by the Handshaking Lemma, $n$ is even. Assume without loss of generality that $v(1)\geq v(0)$. If $v(1)-v(0)$ is odd, then $v(1)+v(0)+v(1)-v(0)=2v(1)$ must be odd, which is a contradiction. Thus, $v(1)-v(0)$ is always even. \qed
\end{proof}
 
\begin{example}
 In Figures~\ref{fig:EBI-K3xK2}-~\ref{fig:EBI-K5xK2}, we give $EBI(K_3\times K_2)$, $EBI(K_4\times K_2)$, and $EBI(K_5\times K_2)$, respectively, and provide examples of labelings that produce each index.
\end{example}
 

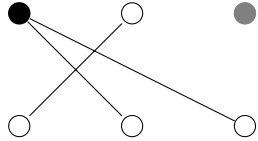
\begin{figure}[ht]
\centering
\begin{tikzpicture}
\tikzstyle{0-label}=[circle,fill=black!50,inner sep=3pt]
\tikzstyle{1-label}=[circle,fill=black,inner sep=3pt]

\node[1-label] (v-1) at (0,1.5) {};

\foreach \i/\j/\k in {1.5/1.5/2,0/0/4,1.5/0/5,3/0/6}
{
  \draw
    (\i,\j) circle (4pt);
  \node[] (v-\k) at (\i,\j) {};
}

\node[0-label] (v-3) at (3,1.5) {};

\draw[color=black] 
(v-1) -- (v-5)
(v-1) -- (v-6)
(v-2) -- (v-4);
\end{tikzpicture}
\caption{$EBI(K_3 \times K_2) = \{ 0 \}$. We show only the $1$-edges.}
\label{fig:EBI-K3xK2}
\end{figure}


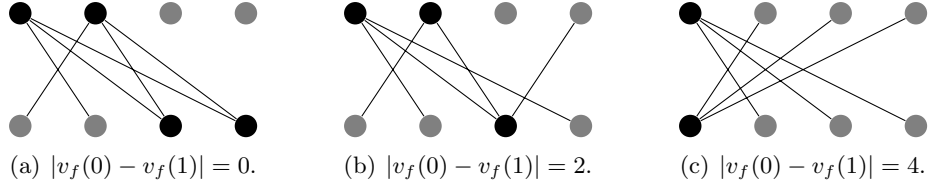
\begin{figure}[ht]
\centering
\subfigure[$|v_f(0)-v_f(1)|=0$.]
{
\begin{tikzpicture}
\tikzstyle{0-label}=[circle,fill=black!50,inner sep=3pt]
\tikzstyle{1-label}=[circle,fill=black,inner sep=3pt]

\foreach \i/\j/\k in {0/1.5/1,1/1.5/2,2/0/7,3/0/8}
{
  \node[1-label] (v-\k) at (\i,\j) {};
}

\foreach \i/\j/\k in {2/1.5/3,3/1.5/4,0/0/5,1/0/6}
{
  \node[0-label] (v-\k) at (\i,\j) {};
}

\draw[color=black] 
(v-1) -- (v-6)
(v-1) -- (v-7)
(v-1) -- (v-8)
(v-2) -- (v-5)
(v-2) -- (v-7)
(v-2) -- (v-8);
\end{tikzpicture}
}
\quad\quad
\subfigure[$|v_f(0)-v_f(1)|=2$.]
{
\begin{tikzpicture}
\tikzstyle{0-label}=[circle,fill=black!50,inner sep=3pt]
\tikzstyle{1-label}=[circle,fill=black,inner sep=3pt]

\foreach \i/\j/\k in {0/1.5/1,1/1.5/2,2/0/7}
{
  \node[1-label] (v-\k) at (\i,\j) {};
}

\foreach \i/\j/\k in {2/1.5/3,3/1.5/4,0/0/5,1/0/6,3/0/8}
{
  \node[0-label] (v-\k) at (\i,\j) {};
}

\draw[color=black] 
(v-1) -- (v-6)
(v-1) -- (v-7)
(v-1) -- (v-8)
(v-2) -- (v-5)
(v-2) -- (v-7)
(v-4) -- (v-7);
\end{tikzpicture}
}
\quad\quad
\subfigure[$|v_f(0)-v_f(1)|=4$.]
{
\begin{tikzpicture}
\tikzstyle{0-label}=[circle,fill=black!50,inner sep=3pt]
\tikzstyle{1-label}=[circle,fill=black,inner sep=3pt]

\foreach \i/\j/\k in {0/1.5/1,0/0/5}
{
  \node[1-label] (v-\k) at (\i,\j) {};
}

\foreach \i/\j/\k in {1/1.5/2,2/1.5/3,3/1.5/4,1/0/6,2/0/7,3/0/8}
{
  \node[0-label] (v-\k) at (\i,\j) {};
}

\draw[color=black] 
(v-1) -- (v-6)
(v-1) -- (v-7)
(v-1) -- (v-8)
(v-5) -- (v-2)
(v-5) -- (v-3)
(v-5) -- (v-4);
\end{tikzpicture}
}
\caption{$EBI(K_4 \times K_2) = \{ 0,2,4 \}$. We show only the $1$-edges.}
\label{fig:EBI-K4xK2}
\end{figure}


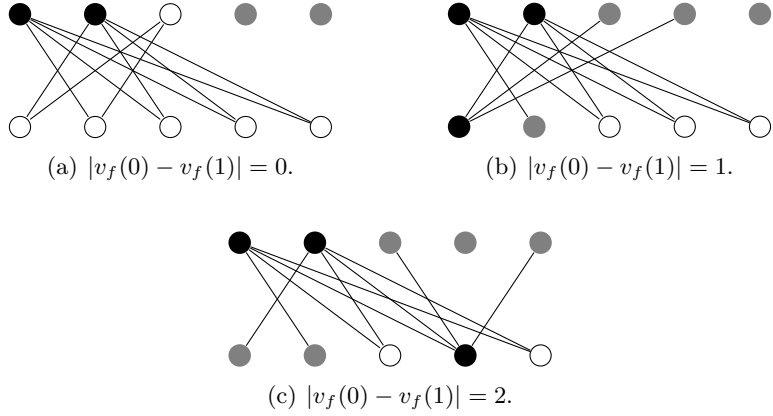
\begin{figure}[ht]
\centering
\subfigure[$|v_f(0)-v_f(1)|=0$.]
{
\begin{tikzpicture}
\tikzstyle{0-label}=[circle,fill=black!50,inner sep=3pt]
\tikzstyle{1-label}=[circle,fill=black,inner sep=3pt]

\foreach \i/\j/\k in {0/1.5/1,1/1.5/2}
{
  \node[1-label] (v-\k) at (\i,\j) {};
}

\foreach \i/\j/\k in {3/1.5/4,4/1.5/5}
{
  \node[0-label] (v-\k) at (\i,\j) {};
}

\foreach \i/\j/\k in {2/1.5/3,0/0/6,1/0/7,2/0/8,3/0/9,4/0/10}
{
  \draw
    (\i,\j) circle (4pt);
  \node[] (v-\k) at (\i,\j) {};
}

\foreach \i/\j in {1/7,1/8,1/9,1/10,2/6,2/8,2/9,2/10,3/6,3/7}
{
\draw[color=black] 
	(v-\i) -- (v-\j);
}
\end{tikzpicture}
}
\quad\quad\quad
\subfigure[$|v_f(0)-v_f(1)|=1$.]
{
\begin{tikzpicture}
\tikzstyle{0-label}=[circle,fill=black!50,inner sep=3pt]
\tikzstyle{1-label}=[circle,fill=black,inner sep=3pt]

\foreach \i/\j/\k in {0/1.5/1,1/1.5/2,0/0/6}
{
  \node[1-label] (v-\k) at (\i,\j) {};
}

\foreach \i/\j/\k in {2/1.5/3,3/1.5/4,4/1.5/5,1/0/7}
{
  \node[0-label] (v-\k) at (\i,\j) {};
}

\foreach \i/\j/\k in {2/0/8,3/0/9,4/0/10}
{
  \draw
    (\i,\j) circle (4pt);
  \node[] (v-\k) at (\i,\j) {};
}

\foreach \i/\j in {1/7,1/8,1/9,1/10,2/6,2/8,2/9,2/10,3/6,4/6}
{
\draw[color=black] 
	(v-\i) -- (v-\j);
}
\end{tikzpicture}
}
\\[\baselineskip]
\subfigure[$|v_f(0)-v_f(1)|=2$.]
{
\begin{tikzpicture}
\tikzstyle{0-label}=[circle,fill=black!50,inner sep=3pt]
\tikzstyle{1-label}=[circle,fill=black,inner sep=3pt]

\foreach \i/\j/\k in {0/1.5/1,1/1.5/2,3/0/9}
{
  \node[1-label] (v-\k) at (\i,\j) {};
}

\foreach \i/\j/\k in {2/1.5/3,3/1.5/4,4/1.5/5,0/0/6,1/0/7}
{
  \node[0-label] (v-\k) at (\i,\j) {};
}

\foreach \i/\j/\k in {2/0/8,4/0/10}
{
  \draw
    (\i,\j) circle (4pt);
  \node[] (v-\k) at (\i,\j) {};
}

\foreach \i/\j in {1/7,1/8,1/9,1/10,2/6,2/8,2/9,2/10,3/9,5/9}
{
\draw[color=black] 
	(v-\i) -- (v-\j);
}
\end{tikzpicture}
}
\caption{$EBI(K_5 \times K_2) = \{ 0,1,2 \}$. We show only the $1$-edges.}
\label{fig:EBI-K5xK2}
\end{figure}

\begin {theo} \label{crownEBI}
  For any positive even integer $n$,
\begin{align*}
	EBI(K_n \times K_2)=\{0, 2, \dots, 2n-6, 2n-4\}
\end{align*}
and for any odd integer $n>3$,
\begin{align*}
	EBI(K_n \times K_2)=\{0, 1, \dots, 2n-9, 2n-8\}.
\end{align*}
\end {theo}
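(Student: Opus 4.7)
My plan follows the two-phase template noted at the start of Section~3: first I would pin down the maximum element of $EBI(K_n \times K_2)$ from above, then construct explicit edge-friendly labelings realizing every value in the claimed set. The two parities of $n$ require slightly different upper-bound arguments because they interact differently with Lemmas~\ref{maxEBIregular} and~\ref{evenEBI}.

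For the upper bound, recall that $K_n \times K_2$ is $(n-1)$-regular on $2n$ vertices. When $n$ is even, $r = n-1$ is odd and $2n \equiv 0 \pmod 4$, so Lemma~\ref{maxEBIregular} immediately yields $\max EBI \leq (n-2)(2n)/n = 2n-4$; since every vertex has odd degree, Lemma~\ref{evenEBI} further restricts $EBI$ to even integers, leaving $\{0,2,\ldots,2n-4\}$ as the only candidates. When $n$ is odd, $r$ is even and Lemma~\ref{maxEBIregular} does not apply directly, so I would re-run its counting by hand. Writing $s = v_f(1)$, $d = v_f(0)$, $u = 2n - s - d$ (and assuming WLOG $d \geq s$), and using that $\deg_1 \geq (n+1)/2$ on the $1$-vertices, $\deg_1 \leq (n-3)/2$ on the $0$-vertices, and $\deg_1 = (n-1)/2$ on unlabeled vertices, the identity $\sum_v \deg_1(v) = n(n-1)$ simplifies to $d \leq s(n-1)/2$. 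Optimizing $d-s$ subject to this inequality and $s+d \leq 2n$ attains its maximum near $s = 4n/(n+1)$, giving $d - s \leq 2n - 8$ for odd $n \geq 7$ (realized at $s=4$). The base case $n = 5$ needs a separate check, since the continuous optimum lies between feasible integer values of $s$ and the naive counting bound overshoots the correct value $2n-8 = 2$; Figure~\ref{fig:EBI-K5xK2} already indicates what the sharper analysis should look like.

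For the matching lower bound I would exhibit an explicit edge-friendly labeling attaining the maximum and then obtain labelings of smaller index by local edge-swap operations. The natural maximum-attaining construction places the minority class on two vertices (even $n$) or four vertices (odd $n$) positioned so that every edge incident to this small set can be colored $1$, and pads with additional $1$-edges on the complement to enforce $e(0) = e(1)$; the labelings in Figures~\ref{fig:EBI-K4xK2}(c) and~\ref{fig:EBI-K5xK2}(c) exhibit exactly this template. To descend by $2$ (even $n$) or by $1$ (odd $n$), I would swap a carefully chosen $1$-edge with a $0$-edge, arranging things so that exactly one or two vertex classifications change while friendliness is preserved. The principal obstacle is verifying that such a useful swap exists at every stage, so that no intermediate value is skipped. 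The odd-$n$ case is the subtler one, because producing both parities of $|v(0)-v(1)|$ requires interleaving $\pm 1$ swaps (which toggle a single vertex through the unlabeled state) with $\pm 2$ swaps, and careful bookkeeping of vertex $1$-degrees is needed to keep both operations available throughout the descent.
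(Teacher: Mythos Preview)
Your overall plan matches the paper's: bound $\max EBI$ from above, exhibit a labeling attaining it, then descend by label swaps. For even $n$ the two arguments are identical. For the odd-$n$ upper bound you diverge: the paper excludes each of $|v(0)-v(1)|=2n,2n-1,\ldots,2n-7$ by a separate sum of $1$-degrees over one bipartition class, whereas you derive the single constraint $d\le s(n-1)/2$ and optimize $d-s$ over integer $(s,d)$. Your route is tidier for $n\ge 7$ but, as you correctly flag, still leaves $n=5$ to ad~hoc treatment; the paper's case analysis is more tedious but covers all odd $n\ge 5$ uniformly. One small slip: the inequality $d\le s(n-1)/2$ comes from the \emph{upper} bound $\deg_1\le n-1$ on $1$-vertices (together with $\deg_1\le (n-3)/2$ on $0$-vertices and $=(n-1)/2$ on unlabeled ones), not from the lower bound $\deg_1\ge (n+1)/2$ you quote; using the latter yields the useless reverse inequality. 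On the construction side your sketch agrees with the paper, but note that the ``padding'' step is where the real work lies: once the minority edges are placed, the remaining $1$-edges must be distributed so that \emph{every} majority vertex ends up with $\deg_1$ exactly one below $\deg_0$, which forces an exactly regular sub-labeling on the complement. The paper handles this with an explicit cyclic scheme, and the same scheme is what makes the swap-descent bookkeeping tractable.
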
 
 
\begin{proof}
 We label edges of $G=K_n\times K_2$ to find the maximum element of the edge-balanced index set. Vertices will be of two types, those which are incident to many more $0$-edges than $1$-edges, called \emph{dense}, and those which are incident to marginally more $1$-edges than $0$-edges, called \emph{sparse}. To find the maximum element of $EBI(G)$, our goal is to minimize the number of dense vertices and maximize the number of sparse vertices. Let $A$ and $B$ be the parts of $G$. 
 
\textbf{\emph{Case 1}}: Suppose $n$ is even. Choose vertices $u\in A$ and $v\in B$  where $u \not \sim v$. Set $D=\{u,v\}$ and $S=V(G)-D$. Label all edges incident to $u$ or $v$ by $0$. 
For $i=1, \dots, n-1$, let $u_i \in A \cap S$ and $v_i \in B \cap S$ with $u_i \not \sim v_i$. Define a cyclic order on the vertices $u_i \in A \cap S$ as follows: for $1 \leq i < n-1$, the \emph{succeeding} vertex of $u_i$ is $u_{i+1}$ and the succeeding vertex of $u_{n-1}$ is $u_1$. Similarly, define the \emph{next} vertex of $u_i$ to be $v_{i+1}$ for $1 \leq i < n-1$ and the next vertex of $u_{n-1}$ to be $v_1$. For each $i \in \{1, \ldots, n-1 \}$, let $U$ be the set containing $u_i$ and the succeeding $\frac{n-2}{2}$ vertices, so that $|U| = \frac{n}{2}$. Select the next vertex of each vertex in $U$ and label the edges between these vertices and $u_i$ by $1$. Label the remaining edges incident to $u_i$ by $0$, as in Figure~\ref{fig:n-even}.

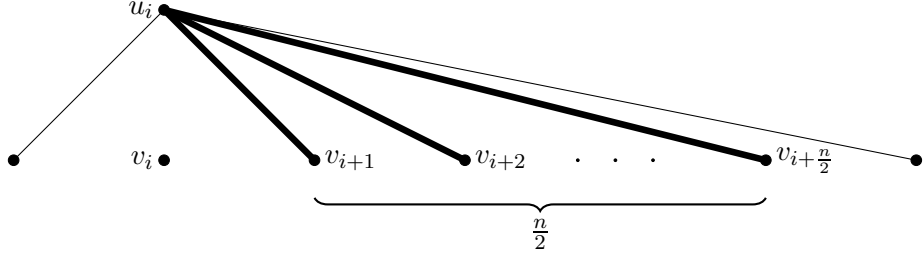
\begin{figure}[ht]
\centering
\begin{tikzpicture}
\draw [line width=2.5pt](0,2) -- (2,0);
\draw [line width=2.5pt](0,2) -- (4,0);
\draw [line width=2.5pt](0,2) -- (8,0);
\draw (0,2) -- (-2,0);
\draw (0,2) -- (10,0);

\coordinate[label=left:$u_i$] (1) at (0,2);
\coordinate[label=left:$v_i$] (4) at (0,0);
\coordinate[label=right:$v_{i+1}$] (5) at (2,0);
\coordinate[label=right:$v_{i+2}$] (6) at (4,0);
\coordinate[label=right:$v_{i+\frac{n}{2}}$] (7) at (8,0);

\filldraw [black]
(0,2) circle (2pt)
(-2,0) circle (2pt)
(0,0) circle (2pt)
(2,0) circle (2pt)
(4,0) circle (2pt)
(5.5,0) circle (.5pt)
(6,0) circle (.5pt)
(6.5,0) circle (.5pt)
(8,0) circle (2pt)
(10,0) circle (2pt);

\draw[thick,decorate,decoration={brace,amplitude=5pt}] 
(8,-.5) -- (2,-.5)
node[black,midway,below=4pt]{$\frac{n}{2}$}; 
\end{tikzpicture}
\caption{The case when $n$ is even. The heavy lines are the $1$-edges.}
\label{fig:n-even}
\end{figure}
 
 Under this edge labeling, the number of $0$-edges is $2(n-1)+(n-1)\frac{n-4}{2}$, the number of $1$-edges is $(n-1)\frac{n}{2}$, and the two quantities are identical. The vertices of $D$ are labeled $0$ and the vertices of $S$ are labeled $1$. Hence, $|v(0)-v(1)|=2n-4$. By Lemma~\ref{maxEBIregular} the edge-balanced index set contains no larger values.

 To attain the smaller values of $EBI(G)$, we switch $0$ and $1$ labels of pairs of edges incident to the same vertex. Each of these pairwise switches will decrease $|v(0)-v(1)|$ by $2$. The following is a list of such edge pairs:
\begin{itemize}
	\item $((u,v_2), (v_2,u_1))$, $((u,v_3), (v_3,u_2))$, \dots, $((u,v_{\frac{n}{2}}), (v_{\frac{n}{2}},u_{\frac{n-2}{2}}))$, switching the labels on $u_1, \dots, u_{\frac{n-2}{2}}$ from $1$ to $0$.
  \item $((v,u_1),(u_1,v_3))$, $((v,u_2),(u_2,v_4))$, \dots, $((v,u_{\frac{n-2}{2}}), (u_{\frac{n-2}{2}},v_{\frac{n+2}{2}}))$, switching the labels on $v_3, \dots, v_{\frac{n+2}{2}}$ from $1$ to $0$. 
\end{itemize}
By Lemma \ref{evenEBI}, we attain the first result.

\textbf{\emph{Case 2}}: Suppose $n$ is odd. Choose vertices $u,u' \in A$ and $v,v' \in B$ where $u \not\sim v$ and $u' \not\sim v'$. Set $D=\{u,u',v,v'\}$ and $S=V(G)-D$. Label all edges incident to a vertex of $D$ and a vertex of $S$ by $0$. For $i=1, \dots, n-2$, let $u_i \in A \cap S$ and $v_i \in B \cap S$ with $u_i \not \sim v_i$. Similar to the previous case, define a cyclic order on the vertices $u_i \in A \cap S$ as follows: for $1 \leq i < n-2$, the succeeding vertex of $u_i$ is $u_{i+1}$ and the succeeding vertex of $u_{n-2}$ is $u_1$. Likewise, define the next vertex of $u_i$ to be $v_{i+1}$ for $1 \leq i < n-2$ and the next vertex of $u_{n-2}$ to be $v_1$. For each $i \in \{1, \ldots, n-2 \}$, let $U$ be the set containing $u_i$ and the succeeding $\frac{n-1}{2}$ vertices, so that $|U| = \frac{n+1}{2}$. Select the next vertex of each vertex in $U$ and label the edges between these vertices and $u_i$ by $1$. Label the remaining edges incident to $u_i$ by $0$.
  
 Under this edge labeling, the number of $0$-edges is $2(n-2)+(n-2)\frac{n-3}{2}$, the number of $1$-edges is $(n-2)\frac{n+1}{2}$, and the two quantities are identical. Lastly, we label $(u,v')$ by $0$ and $(u',v)$ by $1$. The vertices of $D$ are labeled $0$ and the vertices of $S$ are labeled $1$. Hence, $|v(0)-v(1)|=2n-8$. 
 
 To show that $EBI(G)$ does not contain larger values, we argue as follows. For $|v(0)-v(1)|=2n$, $2n-1$, or $2n-2$ we sum the $1$-degrees of the vertices in a part that majorizes $1$-degree vertices and find the labeling is not edge-friendly. For $|v(0)-v(1)|=2n-3$, notice that we cannot have three unlabeled vertices and the rest labeled one. Thus, there is only one unlabeled vertex $x$ and only one $0$-labeled vertex $y$. If $x$ and $y$ are in the same part, then sum the $1$-degrees of vertices in the other part to find the labeling is not edge-friendly. If $x$ and $y$ are in opposite parts, sum the $1$-degrees of the vertices in the part containing $x$ to find the labeling is not edge-friendly. 
 
\emph{Subcase 1}: Suppose $|v(0)-v(1)|=2n-4$. If $v(0)=1$, then we argue as above. Thus we assume $v(0)=2$ and let $u$ and $v$ be the two vertices with label $0$. If $u$ and $v$ are in the same part, then summing the $1$-degrees of vertices in the other part shows the labeling is not edge-friendly. If $u$ and $v$ are in different parts, say $u\in A$ and $v\in B$, we find that for $n \geq 5$,
\begin{align*}
	e(1)=\sum_{w\in A, w\neq u}\deg_1(w) + \deg_1(u) \geq \frac{(n-1)(n+1)}{2} > \frac{n(n-1)}{2}+1, 
\end{align*}
and the labeling is not edge-friendly.
  
\emph{Subcase 2}: If $|v(0)-v(1)|=2n-5$, then $v(0)=2$ and some vertex must be unlabeled; say $u$ and $v$ are labeled $0$ and $x$ is unlabeled. If $u$ and $v$ are in the same part, we are done as described previously. Likewise, if $u$ and $v$ are in different parts, summing the $1$-degrees of the vertices in the part not containing $x$ (as in the previous subcase) results in an labeling that is not edge-friendly.

\emph{Subcase 3}: Suppose $|v(0)-v(1)|=2n-6$. If $v(0)=2$, then we argue as before. Suppose $v(0)=3$ and some part of $G$, say $A$, contains one or fewer vertices labeled $0$. Summing over the $1$-degrees of vertices in $A$ shows the labeling cannot be edge-friendly.
 
\emph{Subcase 4}: If $|v(0)-v(1)|=2n-7$, then $v(0)=3$ and some vertex must be unlabeled. Again, some part of $G$, say $A$, contains fewer vertices with label $0$, say $u\in A$ is labeled $0$. Summing over the $1$-degree of vertices in $A$, we find that for $n \geq 5$, 
\begin{align*}
	e(1)=\sum_{w\in A, w\neq u}\deg_1(w) + \deg_1(u) \geq \frac{n-1}{2}+\frac{(n-2)(n+1)}{2} = \frac{n^2-1}{2}-1, 
\end{align*}
and the labeling is not edge-friendly.
 
 To attain the smaller values of $EBI(G)$, we switch $0$ and $1$ labels of pairs of edges incident to the same vertex in such a way that every pairwise switch will decrease $|v(0)-v(1)|$ by $1$. The following is a list of such edge pairs:
\begin{itemize}
 \item $((u,v_2), (v_2,u_1))$, $((u',v_3), (v_3,u_1))$, $((u,v_3), (v_3,u_2))$, $((u',v_4), (v_4,u_2))$, \dots, $((u,v_{\frac{n-1}{2}}), (v_{\frac{n-1}{2}},u_{\frac{n-3}{2}}))$, $((u',v_{\frac{n+1}{2}}), (v_{\frac{n+1}{2}},u_{\frac{n-3}{2}}))$, switching the labels on each of $u_1, \dots, u_{\frac{n-3}{2}}$, first to unlabeled vertices and then to vertices labeled $0$.
 \item $((v,u_2), (u_2,v_1))$, $((v',u_3), (u_3,v_1))$, $((v,u_3), (u_3,v_2))$, $((v',u_4), (u_4,v_2))$, \dots, $((v,u_{\frac{n-3}{2}}), (u_{\frac{n-3}{2}},v_{\frac{n-5}{2}}))$, $((v',u_{\frac{n-1}{2}}), (u_{\frac{n-1}{2}},v_{\frac{n-5}{2}}))$, switching the labels on each of $v_1, \dots, v_{\frac{n-5}{2}}$, first to unlabeled vertices and then to vertices labeled $0$
\end{itemize}
Notice that after these switches, one part of $G$ contains $2+\frac{n-3}{2} = \frac{n+1}{2}$ vertices which are labeled $0$ and the other part of $G$ contains $2+\frac{n-5}{2} = \frac{n-1}{2}$ vertices which are labeled $0$, and there are no unlabeled vertices. Therefore, under this labeling, $|v(1)-v(0)|=0$. \qed
\end{proof}

\bibliographystyle{plain}

 \end{document}